\begin{document}

\newtheorem{theorem}{Theorem}[section]
\newtheorem{corollary}[theorem]{Corollary}
\newtheorem{proposition}[theorem]{Proposition}
\newtheorem{lemma}[theorem]{Lemma}
\theoremstyle{definition}
\newtheorem{definition}[theorem]{Definition}
\theoremstyle{remark}
\newtheorem{remark}[theorem]{\bf Remark}
%Command Section%
%%%%%%%%%%%
\newcommand{\ds}{\displaystyle}
\newcommand{\R}{\mathbb{R}}
\newcommand{\noi}{\noindent}
\newcommand{\ts}{\thinspace}
\newcommand{\leqn}[1]{\\*\\* \indent #1 \\*\\*}
\newcommand{\leqnn}[1]{\\*\\* \indent #1 \\*}
\newcommand{\bleqn}[1]{\\*\\*\\* \indent #1 \\*}
\renewcommand{\thesubsection}{}
\newcommand{\norm}[1]{\left \lVert #1 \right \rVert}
\newcommand{\biarraysmall}[2]{\begin{array}{c} #1 \vspace{-.20cm}\\ #2  \end{array}}
\newcommand{\biarray}[2]{\begin{array}{c} #1 \vspace{.3cm}\\ #2  \end{array}}
\newcommand{\triarray}[3]{\begin{array}{c} #1 \vspace{.15cm}\\ #2 \vspace{.15cm}\\ #3 \end{array}}
\newcommand{\quadarray}[4]{\begin{array}{c} #1 \vspace{.15cm}\\ #2 \vspace{.15cm}\\ #3 \vspace{.15cm}\\ #4 \end{array}}
\newcommand{\quintarray}[5]{\begin{array}{c} #1 \vspace{.15cm}\\ #2 \vspace{.15cm}\\ #3 \vspace{.15cm}\\ #4 \vspace{.15cm}\\ #5 \end{array}}
\newcommand{\barr}{\begin{eqnarray}}
\newcommand{\beq}{\begin{equation}}
\newcommand{\bpf}{\begin{proof} \quad}
\newcommand{\btm}{\begin{thm}}
\newcommand{\blem}{\begin{lem}}
\newcommand{\elem}{\end{lem}}
\newcommand{\earr}{\end{eqnarray}}
\newcommand{\eeq}{\end{equation}}
\newcommand{\epf}{\end{proof}}
\newcommand{\etm}{\end{thm}}
\newcommand{\beqq}{\begin{equation*}}
\newcommand{\eeqq}{\end{equation*}}

%
%\noindent{\footnotesize {\rm To appear in}\\*[-1.00mm]
%{\rm{\em Dynamics of Continuous, Discrete and Impulsive Systems}}\\*[-1.00mm]
%http:monotone.uwaterloo.ca/$\sim$journal} $~$ 
%\vspace{.3in}
%%USE THE STUFF BELOW AS A GUIDE TO SET UP THE START OF PAPER%%

\title[2nd order periodic equations]{Periodic behaviour of nonlinear second order discrete dynamical systems}
\author[Daniel Maroncelli and J. Rodr\'{\i}guez ]{  Daniel Maroncelli$^*$ and Jes\'{u}s Rodr\'{\i}guez\\ }
\address[D. Maroncelli]{Department of Mathematics, Box 7388, Wake Forest University, Winston-Salem, NC 27109-6233, U.S.A}
\address[J. Rodr\'{\i}guez]{Department of Mathematics, Box 8205, North Carolina State University, Raleigh, NC 27695-8205, U.S.A}
\email[D. Maroncelli]{maroncdm@wfu.edu}
\email[J.  Rodr\'{\i}guez]{rodrigu@ncsu.edu}
\thanks{$^*$Corresponding Author}

%TopMatter
%%%%%%%
%Abstract%
%%%%%%
{\footnotesize
\noindent
 \begin{abstract}   In this work we provide conditions for the existence of periodic solutions to nonlinear, second-order difference equations of the form
\begin{equation*}
y(t+2)+by(t+1)+cy(t)=g(t,y(t))
\end{equation*}
where $c\neq 0$, and $g:\mathbb{Z}^+\times\R\to \R$ is continuous and periodic in $t$. Our analysis uses the Lyapunov-Schmidt reduction in combination with fixed point methods and topological degree theory.
\end{abstract}

\keywords{Periodic difference equations,  Resonance, Lyapunov-Schmidt procedure, Schauder's Fixed Point Theorem, Topological Degree}
\maketitle
\begin{center}\textbf{{\tiny($^{**}$The final version of this manuscript has been accepted for publication in Journal of Difference Equations and Applications)}}\end{center}
%%{\small\bf AMS (MOS) subject classification:} This is optional.
%%%But please supply them whenever possible.}

%Begin Document%
%%%%%%%%%%

%TopMatter
%%%%%%%

%Introduction%
%%%%%%%%
\section{Introduction}
\setlength{\parskip}{.25cm plus 0.05cm minus 0.02cm}

In this work we provide conditions for the existence of periodic solutions to nonlinear, second-order difference equations of the form
\begin{equation}\label{periodicdiff}
y(t+2)+by(t+1)+cy(t)=g(t,y(t)).
\end{equation}
Throughout our discussion we will assume that $b$ and $c$ are real constants, $c\neq 0$, and  $g:\mathbb{Z}^+\times\R\to\R$ is continuous and periodic in its first component.

In \cite{Etheridgesecondorder}, Rodr\'{\i}guez and Etheridge prove the existence of $N$-periodic solutions to (\ref{periodicdiff}) under the following conditions:
\begin{enumerate}
\item[H1)] \label{rodhyp1}The nonlinearity $g$ is independent of $t$ and sublinear; that is, there exists $M_1$, $M_2$ and $\beta$, with $0\leq \beta<1$,  such that for all $x\in \R$, $|g(x)|\leq M_1|x|^\beta+M_2$.
\item[H2)\label{rod}] \label{rodhyp2}There exists a constant $\hat{z}$ such that for all $x$ with $|x|\geq \hat{z}$, $xg(x)>0$.
\item[H3)] $N>1$ is odd, with $N\arccos\left(-{b\over 2}\right)$ not a multiple of $2\pi$ whenever $c=1$ and $|b|<2$.
\end{enumerate}
Their analysis was highly dependent on the structure of the solution space to the periodic linear homogeneous problem
\begin{equation}\label{periodicdiffhom}
y(t+2)+by(t+1)+cy(t)=0
\end{equation}
and its interaction with the nonlinearity $g$.

In this paper we extend the ideas of \cite{Etheridgesecondorder} to a more general class of nonlinearities.  We also obtain results in the case where $N\arccos\left(-{b\over 2}\right)$ is a multiple of $2\pi$.
In this case, the solution space of the periodic problem, (\ref{periodicdiffhom}), is two-dimensional causing the analysis of the nonlinear problem to be much more delicate than the cases observed in  \cite{Etheridgesecondorder}.
Under the conditions given in  \cite{Etheridgesecondorder}, the solution space of (\ref{periodicdiffhom}) is either trivial or one-dimensional.  When the solution space of (\ref{periodicdiffhom}) is two-dimensional, the analysis of (\ref{periodicdiff})  is more complex
due to the fact that the interaction of the solution space to this associated homogeneous problem and the nonlinearity, $g$,  is much more complicated.

The study of nonlinear boundary value problems for finite difference equations  is extensive. For those interested, we mention just a few. In \cite{EthridgePeriodic,Etheridgesecondorder,Zhou},  periodic solutions are analyzed. In \cite{MaSturm,RodSturm,RodAbSturmGlobal} the authors study existence of solutions to nonlinear discrete Sturm-Liouville problems.  \cite{MaroncelliDiscreteBounded, RodTaylorscalardiscrete, ZhangMultipoint} discuss the existence of solutions to multi-point problems. Positive solutions are discussed in \cite{HendersonThreepoint,HendersonPost, HendersonPostMult}. Results regarding the existence of multiple solutions may be found in \cite{HendersonThompson,LiuMult}

%We would like to point out that the result obtained in Theorem \ref{T:Main2} is a natural generalization of the bounded case; however, the analysis in this bounded case is much simpler and the change to a sublinear nonlinearity introduces subtle complications.

%%%%%%%%%%%%%%
%%%%%%%Preliminaries%%%%%%%%
 %%%%%%%%
 \section{Preliminaries}

 The nonlinear boundary value problem, (\ref{periodicdiff}), will be viewed as an operator problem. We start by defining
\begin{equation*}
A=
\begin{pmatrix}
0&1\\
-c&-b
\end{pmatrix},
\end{equation*}
 and 
 $f:\mathbb{Z}^+\times \R^2\to{\R}^2 $ by 
\begin{equation*}
f(t,(u,v))=\begin{pmatrix}0\\g(t,u)\end{pmatrix}.
\end{equation*}

Using the standard procedure of writing a scalar difference equation as system, then provided $g$ is $N$-periodic in $t$, the nonlinear boundary value problem, (\ref{periodicdiff}), with periodicity of period $N$, is equivalent to the nonlinear system
\begin{equation}
 x(t+1)=Ax(t)+f(t,x(t)) \hspace{.25cm}  \label{E:System} 
 \end{equation}                          
 subject to 
 \begin{equation}
 x(0)-x(N)=0,
 \end{equation}
where $x(t)$ denotes $\ds{\begin{pmatrix}y(t)\\y(t+1)\end{pmatrix}}$.

The underlying function space for our operator problem is
\begin{equation*}
 \ds{X_N=\left\{\phi:\mathbb{Z}^+\to {\R}^2 \left| \phi \text{ is } N\text{-periodic}\right.\right\},}
\end{equation*}
with the topology of $X_N$ being that generated by the supremum norm. We use $\norm{\cdot}$ to denote this norm, and we will use $|\cdot|$ to denote the standard Euclidean norm.
 
We define operators as follows:

 $\mathcal{L}\colon X_N\to X_N$  by
 \begin{equation*}
(\mathcal{L}x)(t)=x(t+1)- Ax(t)\label{O:L},
\end{equation*}
and

$\mathcal{F}\colon X_N \to X_N$ by
\begin{equation*}\mathcal{F}(x)(t)=f(t,x(t)).
\end{equation*}

\noindent Solving the nonlinear boundary value problem (\ref{periodicdiff}) with periodicity of period $N$ is now equivalent to solving 
\begin{equation}
\mathcal{L}x=\mathcal{F}(x).\label{E:SOper}
\end{equation}

%%  %
%%  %%%%%%%%%%%%%%%%%%%%%%
%%In our analysis of the nonlinear boundary value problem we will use a  projection scheme often referred to  as the Lyapunov-Schmidt procedure.  The result of the Lyapunov-Schmidt reduction will allow us to 
%%write the operator equation, (\ref{E:SOper}), as an equivalent system of equations  in which a degree theoretic argument may be applied to prove the existence of solutions. Crucial to the Lyapunov-Schmidt procedure will be the construction of 
%%projections onto the kernel and image of $\mathcal{L}$. 
%%The following characterization of these spaces can be found in \cite{MaroncelliDiscreteBounded}.

We begin our study of the nonlinear boundary value problem, (\ref{periodicdiff}), with an analysis of the linear nonhomogeneous problem $\mathcal{L}x=h$, where $h$ is a $N$-periodic function.
 
It is clear that the principal fundamental matrix solution to 
 \begin{equation}
  x(t+1)=Ax(t)\label{A:Homog}.
  \end{equation} 
  is given by $A^t$.  We will use this fact to completely characterize the solutions to the linear nonhomogeneous problem $\mathcal{L}x=h$.
For those readers interested in the general theory of difference equations, we suggest \cite{Elaydi, Kelley}. \cite{Kelley} has a nice introduction to second-order self-adjoint difference equations, regular Sturm-Liouville problems being a special case.  
\cite{Elaydi} may be useful to those interested in periodic linear systems.
\begin{proposition}
\label{P:Perp}An element $h\in X_N$  is contained in the $Im(\mathcal{L})$ if and only if 
\begin{equation*}A^N\sum_{i=0}^{N-1}A^{-{(i+1)}}h(i)\in Ker\left(\left(I-A^N\right)^T\right)^\perp.\end{equation*} 
 \end{proposition}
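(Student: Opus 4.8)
The plan is to reduce the operator equation $\mathcal{L}x=h$ to a finite-dimensional solvability condition on the single initial vector $x(0)$, and then to recognize that condition as membership in an image, which for real matrices coincides with the orthogonal complement of the transpose's kernel. First I would solve the nonhomogeneous recurrence by variation of constants. Since $\det A = c \neq 0$, the matrix $A$ is invertible, so writing $\mathcal{L}x=h$ as $x(t+1)=Ax(t)+h(t)$ and iterating, an induction on $t$ yields
\begin{equation*}
x(t)=A^t x(0)+\sum_{i=0}^{t-1}A^{t-1-i}h(i),\qquad t\geq 1.
\end{equation*}
This expresses every term of a candidate solution in terms of the free vector $x(0)$ and the prescribed data $h$.

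Next I would make precise the sense in which finding $x\in X_N$ with $\mathcal{L}x=h$ is equivalent to a single closing condition. An element $x\in X_N$ satisfies $\mathcal{L}x=h$ precisely when $x(t+1)=Ax(t)+h(t)$ holds for all $t$; because $x$, $h$ are $N$-periodic and $A$ is constant, it suffices that this recurrence hold for $t=0,1,\dots,N-1$, after which periodicity propagates it to all $t$. Conversely, prescribing $x(0)$ and using the recurrence to define $x(1),\dots,x(N)$ produces a genuinely $N$-periodic solution exactly when the extension closes up, i.e. when $x(N)=x(0)$. Evaluating the variation-of-constants formula at $t=N$ turns this into the algebraic equation
\begin{equation*}
(I-A^N)x(0)=\sum_{i=0}^{N-1}A^{N-1-i}h(i).
\end{equation*}
Hence $h\in Im(\mathcal{L})$ if and only if the right-hand side lies in $Im(I-A^N)$.

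To match the stated form of the proposition, I would rewrite the forcing sum using invertibility of $A$: since $A^{N-1-i}=A^N A^{-(i+1)}$, the right-hand side equals $A^N\sum_{i=0}^{N-1}A^{-(i+1)}h(i)$. Thus $h\in Im(\mathcal{L})$ if and only if $A^N\sum_{i=0}^{N-1}A^{-(i+1)}h(i)\in Im(I-A^N)$. The proof then concludes by invoking the standard linear-algebra identity $Im(B)=Ker(B^T)^\perp$ for a real square matrix $B$, applied to $B=I-A^N$, which converts the image condition into exactly the orthogonality condition $A^N\sum_{i=0}^{N-1}A^{-(i+1)}h(i)\in Ker\big((I-A^N)^T\big)^\perp$ claimed in the statement.

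I expect the only delicate step to be the equivalence asserted in the second paragraph: one must argue carefully that solvability of the operator equation within the periodic space $X_N$ imposes no constraints beyond the single wrap-around identity $x(N)=x(0)$, so that the infinite system collapses to one finite-dimensional linear equation. The remaining ingredients—the variation-of-constants formula, the rewriting via $A^{-(i+1)}$, and the image/kernel duality—are routine, with the invertibility of $A$ (guaranteed by $c\neq 0$) being the hypothesis that makes the rewriting legitimate.
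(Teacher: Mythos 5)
Your proposal is correct and follows essentially the same route as the paper: variation of parameters, the periodicity closing condition $x(0)=x(N)$ reducing solvability to $A^N\sum_{i=0}^{N-1}A^{-(i+1)}h(i)\in Im(I-A^N)$, and the identity $Im(B)=Ker(B^T)^\perp$ for $B=I-A^N$. Your extra care in verifying the converse direction (that a vector $x(0)$ satisfying the wrap-around equation genuinely generates an $N$-periodic solution) is a point the paper's proof leaves implicit, but the argument is the same.
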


\begin{proof}
If $\mathcal{L}x=h$, for some $x\in X_N$, then using the variation of parameters formula, we have
\begin{equation*}
 x(t)=A^tx(0)+A^t\sum_{i=0}^{t-1}A^{-{(i+1)}}h(i)\label{E:Var}.
\end{equation*}
 Now, using the fact that $x(0)=x(N)$, we have that 
 
 \begin{equation*}
 x(0)=A^Nx(0)+A^N\sum_{i=0}^{N-1}A^{-{(i+1)}}h(i)\label{E:Var}.
\end{equation*}
Thus, $\mathcal{L}x=h$ if and only if $\ds{A^N\sum_{i=0}^{N-1}A^{-{(i+1)}}h(i)\in Im(I-A^N)}$.
The result now follows from the fact that  $Im(I-A^N)=Ker\left(\left(I-A^N\right)^T\right)^\perp.$
\end{proof}

It follows from Proposition $\ref{P:Perp}$ that we may choose a matrix, $W$, whose columns form a basis for $Ker\left((I-A^N)^T\right)$, such that $h\in Im(\mathcal{L})$ if and only 
\begin{equation*}
W^TA^N\sum_{i=0}^{N-1}A^{-{(i+1)}}h(i)=0.
\end{equation*}
If for each $t\in Z^+$ we define $\ds{\Psi(t)=\begin{cases} {(A^N)}^TW& t=0\\ {(A^{-{(i+1)}})}^T{{(A^N)}^T} W& t>0\end{cases}}$, then it is clear that $\mathcal{L}x=h$ if and only if $\ds{\sum_{i=0}^{N-1}\Psi^T(i+1)h(i)}=0$. 

It will be useful to know, see \cite{Etheridgesecondorder} for the proof, that the columns of $\Psi$ span the solution space of the $N$-periodic  linear homogeneous ``adjoint'' problem
\begin{equation}
 \mathcal{L}^*x=0, \label{E:SystemAd} 
 \end{equation} 
 where $\mathcal{L}^*:X_N\to X_N$ is defined by
 \begin{equation*}
 (\mathcal{L}^*x)(t)=x(t+1)-A^{-T}x(t).
 \end{equation*} 
 Since any fundamental matrix solution to the ``adjoint'' problem, (\ref{E:SystemAd}), is of the form $\Psi(t)C$, for some invertible matrix $C$, we have that  $\mathcal{L}x=h$ if and only if 
 \begin{equation*}
 \ds{\sum_{i=0}^{N-1}\Gamma^T(i+1)h(i)}=0
 \end{equation*}
 for any fundamental matrix solution to (\ref{E:SystemAd}), $\Gamma$.

It will also be useful to know that the ``adjoint'' system produces periodic solutions to a scalar difference equation which is very closely related to (\ref{periodicdiff}).   The details are as follows.  Calculating $A^{-T}$, we get
 \begin{equation*}
 \ds{A^{-T}={1\over c}\begin{pmatrix}-b&c\\-1& 0\end{pmatrix}}.
 \end{equation*}
 The ``adjoint'' system, \ref{E:SystemAd}, is now equivalent to
 \begin{equation*}
 \begin{split}
 cx_1(t+1)&=-bx_1(t)+cx_2(t)\\
 cx_2(t+1)&=-x_1(t)
 \end{split}.
 \end{equation*} 
This gives 
\begin{equation*}
cx_2(t+2)+bx_2(t+1)+x_2(t)=0,
\end{equation*}
so that the second component of  a solution to the  ``adjoint'' system is a $N$-periodic solution to  
\begin{equation}
cy(t+2)+by(t+1)+y(t)=0.\label{periodicdiffad}
\end{equation}
We also have the following result regarding the linear homogeneous system, (\ref{A:Homog}).
\begin{proposition}
\label{P:KerL}The solution space of the linear homogeneous problem (\ref{A:Homog}) and the $\ds{ Ker\left(I-A^N\right)}$ have the same dimension.
\end{proposition}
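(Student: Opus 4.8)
The plan is to exhibit an explicit linear isomorphism between the space of $N$-periodic solutions of (\ref{A:Homog}) and $Ker(I-A^N)$, from which equality of dimensions is immediate. Since $A^t$ is the principal fundamental matrix solution of $x(t+1)=Ax(t)$, every solution of (\ref{A:Homog}) is uniquely determined by its initial value through $x(t)=A^tx(0)$. This suggests studying the evaluation map $E$ that sends a solution $x$ (lying in $X_N$) to the vector $x(0)\in\R^2$, and showing that $E$ is a linear bijection onto $Ker(I-A^N)$.

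First I would verify that $E$ is injective: if $x(0)=0$, then $x(t)=A^t\cdot 0=0$ for every $t$, so $x$ is the zero element of $X_N$. Next I would identify the image of $E$. If $x\in X_N$ solves (\ref{A:Homog}), then $N$-periodicity forces $x(N)=x(0)$; combined with the representation $x(N)=A^Nx(0)$ this yields $\ds{(I-A^N)x(0)=0}$, so $x(0)\in Ker(I-A^N)$. Hence $E$ maps the solution space into $Ker(I-A^N)$.

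Conversely, I would check that every vector in $Ker(I-A^N)$ arises in this way. Given $v$ with $A^Nv=v$, define $x(t)=A^tv$; then $x$ solves (\ref{A:Homog}) with $x(0)=v$, and the essential point is that this $x$ is genuinely $N$-periodic. Using the semigroup identity $A^{t+N}=A^tA^N$ one obtains $x(t+N)=A^tA^Nv=A^tv=x(t)$ for every $t$, so $x\in X_N$ and $E(x)=v$. Thus $E$ carries the solution space onto $Ker(I-A^N)$, and being an injective linear map between these two spaces it is an isomorphism, forcing them to have the same dimension.

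There is no serious obstacle in this argument; the only point requiring a little care is confirming that the single algebraic constraint $(I-A^N)x(0)=0$ is sufficient to guarantee $N$-periodicity of $x$ at \emph{all} values of $t$, rather than merely matching $x(N)$ with $x(0)$. This is exactly what the semigroup property of the fundamental matrix $A^t$ supplies, and it is the one step I would state explicitly to make the equivalence airtight.
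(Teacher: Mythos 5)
Your proof is correct and follows essentially the same route as the paper, which in one line asserts that $\mathcal{L}x=0$ if and only if $x(t)=A^tv$ for some $v\in\R^2$ with $(I-A^N)v=0$ --- precisely the bijection your evaluation map $E$ makes explicit. Your careful verification that $(I-A^N)x(0)=0$ forces $N$-periodicity at all $t$ via $A^{t+N}=A^tA^N$ is a worthwhile elaboration of a step the paper leaves implicit, but it is the same argument.
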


\begin{proof}
 It is clear that $\mathcal{L}x=0$ if and only if  $\exists v\in {\R}^2$ satisfying $x(t)=A^tv$ and $ (I-A^N)v=0$. \end{proof}

We will analyze the nonlinear periodic problem, (\ref{periodicdiff}), using an alternative method  in conjunction with fixed points methods and topological degree theory. Crucial to the use of this alternative method is the construction of projections onto the kernel and image of $\mathcal{L}$.  We choose to follow \cite{Etheridgesecondorder}.
\begin{proposition}\label{P:ProjectionKer}
Let $V$ be the orthogonal projection onto $ker(I-A^N)$. If we define $P:X_N\to X_N$ by $(Px)(t)=A^tVx(0)$, then $P$ is a projection onto the $Ker(\mathcal{L})$.
\end{proposition}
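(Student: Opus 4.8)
The plan is to verify in turn the three features asserted by the statement: that $P$ genuinely maps $X_N$ into itself, that $P$ is idempotent, and that its range is exactly $Ker(\mathcal{L})$. Throughout I would lean on two structural facts about the orthogonal projection $V$ onto $ker(I-A^N)$, namely that $V^2=V$ and that $Im(V)=ker(I-A^N)$ with $V$ restricting to the identity on that subspace, together with the characterization of $Ker(\mathcal{L})$ extracted from the proof of Proposition \ref{P:KerL}: an element $x\in X_N$ satisfies $\mathcal{L}x=0$ precisely when $x(t)=A^tv$ for some $v\in ker(I-A^N)$.

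First I would check well-definedness, that is, that $Px\in X_N$ for every $x\in X_N$. Writing $v=Vx(0)$, we have $v\in ker(I-A^N)$ by definition of $V$, so $A^Nv=v$. Hence for every $t$, $(Px)(t+N)=A^{t+N}Vx(0)=A^t(A^Nv)=A^tv=(Px)(t)$, so $Px$ is $N$-periodic. This step is the one place where the specific choice of $V$ as a projection onto $ker(I-A^N)$ (rather than onto some arbitrary subspace) is essential, and it is really the crux of the argument; the remaining verifications are formal.

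Next, idempotency. Since $(Px)(0)=A^0Vx(0)=Vx(0)$, applying $P$ a second time gives $(P(Px))(t)=A^tV\big((Px)(0)\big)=A^tV(Vx(0))=A^tV^2x(0)=A^tVx(0)=(Px)(t)$, using $V^2=V$; thus $P^2=P$.

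Finally I would identify the range. For the inclusion $Im(P)\subseteq Ker(\mathcal{L})$, note that $(Px)(t)=A^tv$ with $v=Vx(0)\in ker(I-A^N)$, so the characterization from Proposition \ref{P:KerL} places $Px$ in $Ker(\mathcal{L})$. For the reverse inclusion it suffices to show that $P$ fixes every element of $Ker(\mathcal{L})$. Given such an $x$, write $x(t)=A^tv$ with $v\in ker(I-A^N)$; then $x(0)=v$, and because $V$ acts as the identity on $ker(I-A^N)$ we get $Vx(0)=v$, whence $(Px)(t)=A^tv=x(t)$. Therefore $Ker(\mathcal{L})\subseteq Im(P)$, and combined with the first inclusion this yields $Im(P)=Ker(\mathcal{L})$, completing the argument.
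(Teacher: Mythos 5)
Your proof is correct, and it supplies exactly the verification the paper omits: the paper states Proposition \ref{P:ProjectionKer} without proof, deferring to \cite{Etheridgesecondorder}, and your three-step check (well-definedness via $A^Nv=v$ for $v=Vx(0)\in Ker(I-A^N)$, idempotency via $V^2=V$ and $(Px)(0)=Vx(0)$, and identification of the range through the characterization of $Ker(\mathcal{L})$ from Proposition \ref{P:KerL}) is the standard argument used there. You also correctly flag the one non-formal point, namely that $N$-periodicity of $Px$ is precisely where the choice of $Ker(I-A^N)$ as the range of $V$ is needed.
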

\begin{proposition}\label{P:ProjectionIm}
If we define $Q:Z\to Z$ by 

\begin{equation*}
\begin{split}
(Qh)(t)&=\Psi(t)\left(\sum_{j=0}^{N-1}|\Psi(j)|^2\right)^{-1}\sum_{i=0}^{N-1}\Psi^T(i)h(i)\\
\end{split}
\end{equation*},

  then $Q$ is a projection with $Ker(Q)=Im(\mathcal{L})$.
\end{proposition}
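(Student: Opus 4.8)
The plan is to recognize $Q$ as the orthogonal projection of $X_N$ onto the span of the columns of $\Psi$, taken with respect to the pairing $\langle u,v\rangle=\sum_{i=0}^{N-1}u^T(i)v(i)$ on $X_N$, and then to identify the orthogonal complement of that span with $Im(\mathcal{L})$. Writing $G=\sum_{j=0}^{N-1}\Psi^T(j)\Psi(j)$ for the symmetric Gram matrix appearing as $\sum_j|\Psi(j)|^2$ in the statement, and $\langle\Psi,h\rangle=\sum_{i=0}^{N-1}\Psi^T(i)h(i)$, the operator reads $(Qh)(t)=\Psi(t)G^{-1}\langle\Psi,h\rangle$. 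Before anything else I would check that $Q$ is well defined and lands in $X_N$. For the latter, since the columns of $\Psi$ are $N$-periodic (they span the solution space of the $N$-periodic adjoint problem (\ref{E:SystemAd})) and $G^{-1}\langle\Psi,h\rangle$ is a constant vector, $(Qh)(\cdot)=\Psi(\cdot)\big(G^{-1}\langle\Psi,h\rangle\big)$ is $N$-periodic. For well-definedness I must show $G$ is invertible: the columns of $\Psi$ are linearly independent as elements of $X_N$, because a relation $\Psi(t)c=0$ for all $t$ forces $(A^{-(t+1)})^{T}(A^{N})^{T}Wc=0$, and invertibility of $(A^{-(t+1)})^{T}(A^{N})^{T}$ together with independence of the columns of $W$ gives $c=0$; hence the Gram matrix $G$ is positive definite and invertible.

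With $G$ invertible, idempotency is a short computation. Using $\sum_{i=0}^{N-1}\Psi^T(i)\Psi(i)=G$,
\[
\sum_{i=0}^{N-1}\Psi^T(i)(Qh)(i)=\Big(\sum_{i=0}^{N-1}\Psi^T(i)\Psi(i)\Big)G^{-1}\langle\Psi,h\rangle=GG^{-1}\langle\Psi,h\rangle=\langle\Psi,h\rangle,
\]
so that $(Q(Qh))(t)=\Psi(t)G^{-1}\langle\Psi,(Qh)\rangle=\Psi(t)G^{-1}\langle\Psi,h\rangle=(Qh)(t)$. Thus $Q^2=Q$ and $Q$ is a projection.

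It remains to prove $Ker(Q)=Im(\mathcal{L})$. Since the columns of $\Psi$ are independent in $X_N$ and $G^{-1}$ is invertible, $(Qh)\equiv 0$ if and only if the constant vector $G^{-1}\langle\Psi,h\rangle$ vanishes, i.e. if and only if $\langle\Psi,h\rangle=\sum_{i=0}^{N-1}\Psi^T(i)h(i)=0$; hence $Ker(Q)=\{h:\langle\Psi,h\rangle=0\}$, the orthogonal complement of the span of the columns of $\Psi$. The discussion following Proposition \ref{P:Perp} shows that $h\in Im(\mathcal{L})$ if and only if $\sum_{i=0}^{N-1}\Psi^T(i+1)h(i)=0$, i.e. $h$ is orthogonal to the shifted columns $\Psi(\cdot+1)$. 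The point that closes the argument is that, because the adjoint system is autonomous, shifting an adjoint solution yields another adjoint solution, so the columns of $\Psi(\cdot+1)$ and those of $\Psi(\cdot)$ span the same subspace of $X_N$, namely the full adjoint solution space, and therefore determine the same orthogonal complement. Consequently $\{h:\langle\Psi(\cdot+1),h\rangle=0\}=\{h:\langle\Psi,h\rangle=0\}$, giving $Ker(Q)=Im(\mathcal{L})$.

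The main obstacle I anticipate is precisely this last identification: reconciling the index shift (the image condition coming from Proposition \ref{P:Perp} involves $\Psi(i+1)$, whereas $Q$ involves $\Psi(i)$) and making rigorous that the two $\Psi$-orthogonality conditions cut out the same subspace. Everything else, namely periodicity of $Qh$, invertibility of $G$, and idempotency, is routine once the pairing and the Gram matrix are set up correctly.
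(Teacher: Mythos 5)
Your proof is correct. Note that the paper itself states Proposition \ref{P:ProjectionIm} without proof (deferring to \cite{Etheridgesecondorder}), and your Gram-matrix argument is the natural one consistent with the paper's surrounding discussion: in particular, your resolution of the index shift is exactly the mechanism the paper already supplies, since the $N$-periodic adjoint solution space is shift-invariant and hence $\Psi(\cdot+1)=\Psi(\cdot)C$ for some constant invertible matrix $C$ (the paper notes that any fundamental matrix solution of (\ref{E:SystemAd}) has the form $\Psi(t)C$), so the conditions $\sum_{i=0}^{N-1}\Psi^T(i+1)h(i)=0$ and $\sum_{i=0}^{N-1}\Psi^T(i)h(i)=0$ cut out the same subspace, giving $Ker(Q)=Im(\mathcal{L})$ as you claim.
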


%% 
%% \begin{proposition}
%%Solving  $\mathcal{L}x=\mathcal{F}(x)$ is equivalent to solving the following system
%%\begin{equation*}
%% \left\{\begin{array}{c}
%% x-Px=M_P(I-Q)\mathcal{F}(x) \\\\
%%\text{and}\\
%% \ds{\sum_{i=0}^{N-1}[\Psi(i)]_ng([x(i)]_m)=0}
%% \end{array}
%% \right.,
%% \end{equation*}
%%  where $M_p$ is $\ds{\left(\mathcal{L}_{|Ker(P)}\right)^{-1}}$ and $[v]_k$ denotes the $kth$ row of a vector $v$. \label{P:Lyap}
%%\end{proposition}
%%
%%\begin{proof}
%%\begin{equation*}
%%\begin{split}
%% \mathcal{L}x=\mathcal{F}(x)&\Longleftrightarrow \left\{\begin{array}{c} E(\mathcal{L}x-\mathcal{F}(x))=0 \\ \text{ and }\\(I-E)(\mathcal{L}x-\mathcal{F}(x))=0 \end{array} \right. \\
%% & \Longleftrightarrow \left\{ \begin{array}{c} \mathcal{L}x-E\mathcal{F}(x)=0 \\ \text{ and }\\(I-E)\mathcal{F}(x)=0 \end{array} \right.\\
%% &\Longleftrightarrow \left\{\begin{array}{c} M_p\mathcal{L}x-M_P(I-Q)\mathcal{F}(x)=0  \\ \text{ and }\\ (I-E)\mathcal{F}(x)=0\end{array} \right.\\ 
%% &\Longleftrightarrow\left\{\begin{array}{c}  {(Q)x-M_P(I-Q)\mathcal{F}(x)=0}\\\text{ and }\\ \ds{\sum_{i=0}^{N-1}\Psi^{T}(i)f(x(i))=0}\end{array} \right.\\
%%&\Longleftrightarrow\left\{\begin{array}{c}  {(Q)x-M_P(I-Q)\mathcal{F}(x)=0}\\\text{ and }\\ \ds{\sum_{i=0}^{N-1}[\Psi(i)]_ng([x(i)]_m)=0}\end{array} \right..
%% \end{split}
%% \end{equation*}
%%\end{proof}

The following is a formulation of the alternative method, often referred to as the  Lyapunov-Schmidt procedure, which we will use to analyze the nonlinear problem, (\ref{periodicdiff}).  Interested readers may consult \cite{Chow,MaroncelliNonlinearImp}.  We include the proof for the benefit of the reader.
 
 \begin{proposition}\label{P:Lyap} Solving $\mathcal{L} x=\mathcal{F}(x)$ is equivalent to solving the system
 \begin{equation*}\left\{\begin{array}{c}
 x=Px+M_p(I-Q)\mathcal{F}(x) \\
\text{and}\\
 Q\mathcal{F}(x)=0
 \end{array}
 \right.
 \end{equation*}
 where $M_p$ is $\ds{(\mathcal{L}{_{|Ker(P)\cap dom(\mathcal{L})}})^{-1}}$.
 \end{proposition}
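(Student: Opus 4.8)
The plan is to exploit the two complementary projections to split the single operator equation $\mathcal{L}x = \mathcal{F}(x)$ into its component lying in $Im(\mathcal{L})$ and its component lying in a complement. The structural facts I would use are that $P$ projects onto $Ker(\mathcal{L})$ (Proposition \ref{P:ProjectionKer}), that $Q$ is a projection with $Ker(Q) = Im(\mathcal{L})$ (Proposition \ref{P:ProjectionIm}), and that $M_p$ inverts $\mathcal{L}$ on the complementary subspace $Ker(P)$. Since $X_N$ is finite-dimensional and $\mathcal{L}$ is everywhere defined, $dom(\mathcal{L}) = X_N$ and no domain subtleties arise.

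For the forward implication, suppose $\mathcal{L}x = \mathcal{F}(x)$. Applying $Q$ and using $Q\mathcal{L} = 0$ (which holds because $Im(\mathcal{L}) = Ker(Q)$) immediately yields the auxiliary equation $Q\mathcal{F}(x) = 0$. Applying $I - Q$ instead, and again using $\mathcal{L}x \in Im(\mathcal{L}) = Ker(Q)$, gives $\mathcal{L}x = (I-Q)\mathcal{F}(x)$. I would then write $x = Px + (I-P)x$; since $Px \in Ker(\mathcal{L})$ we have $\mathcal{L}((I-P)x) = (I-Q)\mathcal{F}(x)$, and because $(I-P)x \in Ker(P)$ I may apply $M_p = (\mathcal{L}|_{Ker(P)})^{-1}$ to obtain $(I-P)x = M_p(I-Q)\mathcal{F}(x)$. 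Adding $Px$ to both sides recovers the first equation of the system.

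For the converse, assume $x = Px + M_p(I-Q)\mathcal{F}(x)$ together with $Q\mathcal{F}(x) = 0$. Applying $\mathcal{L}$, the term $\mathcal{L}Px$ vanishes because $Px \in Ker(\mathcal{L})$, while $\mathcal{L}M_p(I-Q)\mathcal{F}(x) = (I-Q)\mathcal{F}(x)$, since $\mathcal{L}M_p$ is the identity on $Im(\mathcal{L})$ and $(I-Q)\mathcal{F}(x) \in Ker(Q) = Im(\mathcal{L})$. Hence $\mathcal{L}x = (I-Q)\mathcal{F}(x)$, and invoking $Q\mathcal{F}(x) = 0$ turns the right-hand side into $\mathcal{F}(x)$, giving $\mathcal{L}x = \mathcal{F}(x)$.

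The main point requiring care --- really the only substantive step --- is the well-definedness of $M_p$: I must verify that the restriction $\mathcal{L}|_{Ker(P)}$ is a bijection onto $Im(\mathcal{L})$. Injectivity follows because $P$ is the identity on $Ker(\mathcal{L})$, so $Ker(P) \cap Ker(\mathcal{L}) = \{0\}$; surjectivity onto $Im(\mathcal{L})$ then follows from a dimension count, using the decomposition $X_N = Ker(P) \oplus Ker(\mathcal{L})$ and the fact that $\mathcal{L}$ annihilates $Ker(\mathcal{L})$. Once this is in place, every application of $M_p$ above is justified and the two implications close the equivalence.
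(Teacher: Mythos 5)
Your proposal is correct and takes essentially the same route as the paper, which runs the argument as a single chain of equivalences using $Q\mathcal{L}=0$ (from $Im(\mathcal{L})=Ker(Q)$) and the identity $M_p\mathcal{L}=I-P$, exactly the two facts you unpack into separate forward and backward implications. The only difference is cosmetic plus one welcome addition: your explicit check that $\mathcal{L}_{|Ker(P)}$ is a bijection onto $Im(\mathcal{L})$, which the paper leaves implicit in writing $M_p=(\mathcal{L}_{|Ker(P)\cap dom(\mathcal{L})})^{-1}$.
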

 
 \begin{proof}
\begin{equation*}
\begin{split}
 \mathcal{L} x=\mathcal{F}(x)&\Longleftrightarrow \left\{\begin{array}{c} (I-Q)(\mathcal{L} x-\mathcal{F}(x))=0 \\ \text{ and }\\Q(\mathcal{L} x-\mathcal{F}(x))=0 \end{array} \right. \\
 \\
& \Longleftrightarrow \left\{ \begin{array}{c} \mathcal{L} x-(I-Q)\mathcal{F}(x)=0 \\ \text{ and }\\Q\mathcal{F}(x)=0 \end{array} \right.\\
\\
&\Longleftrightarrow \left\{\begin{array}{c} M_p\mathcal{L} x-M_p(I-Q)\mathcal{F}(x)=0  \\ \text{ and }\\ Q\mathcal{F}(x)=0\end{array} \right.\\ 
\\
&\Longleftrightarrow\left\{\begin{array}{c}  {(I-P)x-M_p(I-Q)\mathcal{F}(x)=0}\\ \text{ and }\\Q\mathcal{F}(x)=0 \end{array} \right.\\
\end{split}
\end{equation*}
\end{proof}

%%%%%%%%%%%
 %%%%% %First Main Result%
  %%%%%%%%%

\section{$dim(Ker(\mathcal{L}))<2$}
We now come to our first existence theorem. Throughout this section we will assume that $N>1$ is odd, and  that $g$ is $N$-periodic in $t$.

 \begin{theorem}\label{T:Main1} Let $r, \hat{z}$, and $\delta$ be positive constants and suppose the following conditions hold.

 \begin{enumerate}
 
  \item[C1.] $|g(t,x)|\leq \delta$  for all $t\in\mathbb{Z}^+$ and all $x\in[-2r,2r]$.
     \vspace{.25cm}
  
  \item[C2.] $xg(t,x)> 0$ for all $t\in\mathbb{Z}^+$ and $|x|>\hat{z}$. 
  \vspace{.25cm}
  
   \item[C3.] $\hat{z}+\norm{M_p(I-Q)}\delta<r$.
   \vspace{.25cm}
   
   \item[C4.] If $N\arccos(-{b\over2})$ is a multiple of $2\pi$, then $c\neq 1$ or $2\leq|b|$.
 \end{enumerate}

\noindent Then  (\ref{periodicdiff}) has a $N$-periodic solution. 
 \end{theorem}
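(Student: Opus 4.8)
The plan is to solve $\mathcal{L}x=\mathcal{F}(x)$ via the Lyapunov--Schmidt reduction of Proposition \ref{P:Lyap}, using that $X_N$ is finite dimensional (isomorphic to $\R^{2N}$), so Schauder's theorem becomes Brouwer's, the relevant degree is the Brouwer degree, and compactness is automatic. First I would combine Proposition \ref{P:KerL} with the spectrum of $A$ to record that C4 forces $\dim\bigl(Ker(\mathcal{L})\bigr)\le 1$: since $N$ is odd, a real eigenvalue $\lambda$ of $A$ with $\lambda^N=1$ must equal $1$, and the only way $A^N$ acquires $1$ as a double eigenvalue is through a conjugate pair on the unit circle, which is exactly the configuration C4 excludes. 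If $Ker(\mathcal{L})=\{0\}$ then $P=Q=0$ and the problem reduces to a fixed point of $x\mapsto M_p\mathcal{F}(x)$ on $\norm{x}\le r$, which C1 and C3 supply immediately. The essential case is $\dim\bigl(Ker(\mathcal{L})\bigr)=1$, where the generating eigenvalue is $\lambda=1$; its eigenvector $w$ satisfies $A^tw=w$, so $Ker(\mathcal{L})$ is spanned by the constant sequence $\phi(t)\equiv w$, normalized to have first component $1$. Because the adjoint equation (\ref{periodicdiffad}) has characteristic roots reciprocal to those of $A$, its periodic solution is likewise constant, so the second component of $\Psi$ is a nonzero constant $\kappa$; hence, writing $Px=a\phi$ and $v=(I-P)x$, the bifurcation equation $Q\mathcal{F}(x)=0$ is scalar and governed by $B(a)=\kappa\sum_{i=0}^{N-1}g\bigl(i,a+v_1(i)\bigr)$.

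For the auxiliary equation I would fix $a$ with $|a|\le r$ and solve $v=M_p(I-Q)\mathcal{F}(a\phi+v)$ by Brouwer's theorem on the ball $\norm{v}\le\norm{M_p(I-Q)}\delta$. For such $v$ the first component $a+v_1(t)$ remains in $[-2r,2r]$, since $|a+v_1(t)|\le r+\norm{M_p(I-Q)}\delta<2r$ by C3, so C1 yields $\norm{\mathcal{F}(a\phi+v)}\le\delta$ and the map is a genuine self-map of the ball. The same inequality controls the endpoints of $B$: at $a=r$ we get $a+v_1(t)\ge r-\norm{M_p(I-Q)}\delta>\hat z$, so C2 makes every summand $g(i,a+v_1(i))>0$, whereas at $a=-r$ every summand is negative. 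Thus $B(r)$ and $B(-r)$ have opposite, nonzero signs.

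To combine the two pieces I would use topological degree rather than the intermediate value theorem, since the auxiliary solution $v$ need not be unique and $B$ is therefore not a well-defined function of $a$. Fixing a linear isomorphism $J\colon Im(Q)\to Ker(\mathcal{L})$, define $\Phi(x)=x-Px-M_p(I-Q)\mathcal{F}(x)-JQ\mathcal{F}(x)$, whose zeros are precisely the solutions of $\mathcal{L}x=\mathcal{F}(x)$. On $\Omega=\{\,\norm{Px}<r,\ \norm{(I-P)x}<\rho\,\}$ with $\norm{M_p(I-Q)}\delta<\rho<r-\hat z$, the estimates above keep $\partial\Omega$ free of zeros of $\Phi$: the bound forbids zeros where $\norm{(I-P)x}=\rho$, and the sign of $B$ forbids them where $|a|=r$. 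A linear homotopy switching off the term $M_p(I-Q)\mathcal{F}(x)$ preserves this boundary behaviour and, by the reduction property of the degree, factors $\deg(\Phi,\Omega,0)$ as the product of the degree of the identity on $Ker(P)$, namely $1$, and the degree of the sign-changing scalar map $a\mapsto B(a)$ on $(-r,r)$, namely $\pm1$. Hence $\deg(\Phi,\Omega,0)=\pm1\neq0$ and a solution exists.

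I expect the crux to be this final coupling --- showing the coincidence map $\Phi$ stays zero-free on $\partial\Omega$ throughout the homotopy --- because it is exactly here that C2 and C3 must work together: C3 simultaneously keeps the arguments inside $[-2r,2r]$ so that the bound C1 applies and pushes them beyond $\pm\hat z$ at $a=\pm r$ so that the sign condition C2 can act, while C2 supplies the opposite boundary signs of $B$ that make the degree nonzero. The preliminary identification of both $Ker(\mathcal{L})$ and the adjoint kernel as constant sequences is the other delicate point, since it is what collapses $B$ to $\kappa\sum_i g(i,\cdot)$ and lets the sign condition bite.
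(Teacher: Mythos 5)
Your proposal is correct and takes essentially the same route as the paper: Lyapunov--Schmidt reduction via Proposition \ref{P:Lyap}, the invertible case handled by a Brouwer fixed point on the ball of radius $r$, and in the one-dimensional resonant case a Brouwer degree argument in which C1 and C3 keep the auxiliary component bounded by $\norm{M_p(I-Q)}\delta$ (and the arguments of $g$ inside $[-2r,2r]$) while C2 and C3 force the scalar bifurcation sum $\sum_i g(i,\alpha+\cdot)$ to share the sign of $\alpha$ at $|\alpha|=r$, yielding a nonzero degree. The only differences are cosmetic --- the paper homotopes its coupled map $H_2$ on a single max-norm ball of radius $r$ directly to the identity, whereas you work on a product domain and use the reduction/product property of the degree to factor out the sign-changing scalar map, and you derive the constant-sequence descriptions of $Ker(\mathcal{L})$ and $Ker(\mathcal{L}^*)$ spectrally where the paper cites \cite{Etheridgesecondorder} --- so the estimates and the logical skeleton coincide.
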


\begin{proof}

If C4. holds, see \cite{Etheridgesecondorder} for the details, it is easy to show that one of the following occurs

 \begin{enumerate}
 \item[I.]  The $Ker(\mathcal{L})$ is trivial.
  \item[II.] $\mathcal{L}$ and $\mathcal{L}^*$ are singular with $Ker(\mathcal{L})$ being spanned by $\ds{\begin{pmatrix}1\\1\end{pmatrix}}$ and the $Ker(\mathcal{L}^*)$  being spanned by $\ds{\begin{pmatrix}-c\\1\end{pmatrix}}$.
  \end{enumerate}

We start by assuming $\mathcal{L}$ is invertible.  Since $\mathcal{L}$ is invertible, it is clear   that  the projections $P$ and $Q$ are the zero projections on $X_N$ and that $M_p$ is $\mathcal{L}^{-1}$. In this case, solving (\ref{E:SOper}) reduces to finding the fixed points of $H_1:=\mathcal{L}^{-1}\mathcal{F}$.  We will show $H_1$ has a fixed point using the Brouwer fixed point theorem.

Define $\Omega_1$ to be the closed ball of radius $r$ in $X_N$.  For $x\in\Omega_1$, we have
\begin{equation*}
\begin{split}
\norm{H_1(x)}&=\norm{\mathcal{L}^{-1}\mathcal{F}(x)}\\
	         &\leq \norm{\mathcal{L}^{-1}}\norm{\mathcal{F}(x)}\\
	         &\leq \norm{\mathcal{L}^{-1}}\delta\hspace{1.5cm}\text {(Using C1.)}\\
	         &= \norm{M_p(I-Q)}\delta\\
	         &< r. \hspace{2.5cm}\text {(Using C3.)}
\end{split}
\end{equation*}
Thus, $H_1$ maps $\Omega_1$ into itself and therefore has a fixed point.

We now turn our attention to the case when  $Ker(\mathcal{L})$ is spanned by $\ds{\begin{pmatrix}1\\1\end{pmatrix}}$.
As a matter of notation, we introduce, for $\alpha\in\R$ and $x\in  Im(I-P)$,
\begin{equation*}
p(\alpha,x)=M_p(I-Q)\mathcal{F}\left(\alpha\begin{pmatrix}1\\1\end{pmatrix}+x\right).
\end{equation*}
We then define $H_2:\R\times Im(I-P)\to \R\times Im(I-P)$ as follows
\begin{equation}\label{F:Fixed}
H_2(\alpha,x)=\begin{pmatrix}
\ds{\sum_{i=0}^{N-1}g(i,\alpha +h(\alpha,x)(i))}
\\
x-p(\alpha,x)
\end{pmatrix},
\end{equation}
where $h(\alpha, x)$ denotes the first component of $p(\alpha,x)$.
It is clear from Proposition $\ref{P:Lyap}$ and the descriptions of the $Ker(\mathcal{L})$ and $Ker(\mathcal{L}^*)$ given above, that the solutions to 
(\ref{E:SOper}) are the zeros of $H_2$.

We make $\R\times Im(I-P)$ a Banach space using the max norm; that is,  
\begin{equation*}
\norm{(\alpha,x)}=\max\{|\alpha|,\norm{x}\}
\end{equation*}
 and let $\Omega_2$ denote the closed ball of radius $r$ in $\R\times Im(I-P)$ under this norm.
We will show $H_2$ has a 0 by showing the Brouwer degree, $deg(H_2,\Omega_2,0)$, is nonzero.

Define $E:[0,1]\times \overline{\Omega_2}\to  \R\times Im(I-P)$ by 
\begin{equation*}
E(\lambda,(\alpha,x))=\begin{pmatrix}
\ds{(1-\lambda)\alpha+\lambda\sum_{i=0}^{N-1}g(i,\alpha +h(\alpha,x)(i))}
\\
x-\lambda p(\alpha,x)
\end{pmatrix},
\end{equation*}
It is evident that $E$ is a homotopy between the identity and $H_2$. In what follows, we will show that  $E(\lambda, (\alpha,x))$ is nonzero for each $\lambda\in(0,1)$ and every $(\alpha,x)\in \partial(\Omega_2)$, so that $deg(H_2,\Omega_2,0)\neq 0$ by the invariance of the Brouwer degree under homotopy.

We first observe that for every  $(\alpha,x)\in \Omega_2$, $\norm{p(\alpha, x)}<r$.
To see this note that $|\alpha+x_1(i)|\leq |\alpha|+|x_1(i)|\leq |\alpha|+\norm{x}\leq 2r$, where $x_1$ is the first component of $x$. Therefore, 
\begin{equation*}
\begin{split}\label{L:1Res}
\norm{p(\alpha,x)}&=\norm{M_p(I-Q)\mathcal{F}\left(\alpha\begin{pmatrix}1\\1\end{pmatrix}+x\right)}\\
		       &\leq\norm{M_p(I-Q)}\norm{\mathcal{F}\left(\alpha\begin{pmatrix}1\\1\end{pmatrix}+x\right)}\\
		       &=\norm{M_p(I-Q)}\sup_i|g(i, \alpha+x_1(i))|\\
		       &\leq\norm{M_p(I-Q)}\delta<r.		       
\end{split}
\end{equation*}
Thus, if $(\alpha,x)\in \partial(\Omega_2)$ with $\norm{x}=r$, then $x-\lambda p(\alpha,x)\neq0$ for each $\lambda\in(0,1)$.

Further, for each $(\alpha,x)\in \Omega_2$, if $\alpha\geq \hat{z}+\norm{h(\alpha,x)}$, then for $i=0, 1, \cdots N-1$, we have $g(i,\alpha+h(\alpha, x)(i))>0$.  However, since
$\norm{h(\alpha,x)}\leq\norm{p(\alpha,x)}\leq \norm{M_p(I-Q)}\delta$, then provided $\alpha=r$, we have, using C3., that this is the case.  Thus, when $\alpha=r$, $\alpha$ and $\ds{\sum_{i=0}^{N-1}g(i,\alpha +h(\alpha,x)(i))}$ are both postive. A similar argument shows that when $\alpha=-r$,
$\alpha$ and $\ds{\sum_{i=0}^{N-1}g(i,\alpha +h(\alpha,x)(i))}$ are both negative. Using this observation and the fact that 
$\ds{(1-\lambda)\alpha+\lambda\sum_{i=0}^{N-1}g(i,\alpha +h(\alpha,x)(i))}$
 would be zero for some $\lambda\in(0,1)$ if and only if $\alpha$ and $\ds{\sum_{i=0}^{N-1}g(i,\alpha +h(\alpha,x)(i))}$ have opposite sign, we conclude that $\ds{(1-\lambda)\alpha+\lambda\sum_{i=0}^{N-1}g(i,\alpha +h(\alpha,x)(i))}\neq 0$ whenever $(\alpha,x)\in\Omega_2$ and $|\alpha|=r$. 

Since for each $(\alpha,x)\in\partial(\Omega_2)$ either $|\alpha|=r$ or $\norm{x}=r$, we have, from our discussion above, that for each $\lambda\in(0,1)$, $E(\lambda, (\alpha,x))\neq 0$. Thus, $deg(H_2,\Omega_2,0)=deg(I,\Omega_2,0)=1$ and the result follows.
\end{proof}

\begin{remark} We would like to point out that if the nonlinearity $g$ is $N$-periodic in $t$ and sublinear in $x$, as is assumed in \cite{Etheridgesecondorder}, then provided C2. holds, the conditions of Theorem \ref{T:Main1} are satisfied.  To see this note that if $g$ is sublinear in $x$, then there exists $M_1$, $M_2$ and $\beta$, with $0\leq \beta<1$,  such that for all  $t\in\mathbb{ Z}^+$ and $x\in \R$, $|g(t,x)|\leq M_1|x|^\beta+M_2$.  It follows that  $|g(t,x)|<M_1(2r)^\beta+M_2$ for every $t\in \mathbb{Z}^+$ and each $x\in[-2r,2r]$, so that  by choosing $r$ large enough
\begin{equation*}
\hat{z}+\norm{M_p(I-Q)}\left(M_1(2r)^\beta+M_2\right) <r.
\end{equation*}
Thus, Theorem \ref{T:Main1} contains the results found in Theorem 3.4 of \cite{Etheridgesecondorder}.

As the following example demonstrates,  Theorem \ref{T:Main1} is much stronger than the corresponding result found in \cite{Etheridgesecondorder}, Theorem 3.4,  by allowing for more general nonlinearities. If $g$ satisfies condition $C2.$ of  Theorem \ref{T:Main1} and  is $N$-periodic in $t$ with $|g(t,x)|\leq M_1|x|^s+M_2$, for each $t\in\mathbb{ Z}^+$ and every $x\in \R$, where $s\geq 1$, then 
$|g(t,x)|<M_1(2r)^s+M_2$ for every $t\in \mathbb{Z}^+$ and each $x\in[-2r,2r]$.  If we take $r=2\hat{z}$, then 
$|g(t,x)|\leq M_14^s\hat{z}^s+M_2.$ By choosing $M_1$ and $M_2$ small enough, we clearly will have
\begin{equation*}
\hat{z}+\norm{M_p(I-Q)}( M_14^s\hat{z}^s+M_2)<2\hat{z}=r.
\end{equation*}
This example shows that since the growth restrictions of Theorem \ref{T:Main1}  are on bounded intervals of $x$, the nonlinearity, $g$, is allowed to have much more rapid growth in its end behavior than is allowed by Theorem 3.4 of \cite{Etheridgesecondorder}.
\end{remark}

We now state a corollary which gives conditions that ensure the existence of $N$- periodic solutions for all odd integers $N>1$. The result depends on the following: since $\arccos(w)\in [0,\pi]$ for each $w\in[-1,1]$, we have that provided 
\begin{equation*}
\arccos\left(-{b\over 2}\right)\notin Z:=\left\{{2\pi k\over j}\mid k, j\in \mathbb{N} \text{ and } 0\leq 2k<j\right\},
\end{equation*} 
$N\arccos\left(-{b\over 2}\right)$ is not a multiple of $2\pi$ for every odd integer $N>1$. If we define 
\begin{equation*}
U=\left\{b\in(-2,2)\mid \arccos\left(-{b\over 2}\right)\in Z\right\},
\end{equation*}
 we get the following as a consequence of Theorem \ref{T:Main1}.
 
\begin{corollary}\label{C:Main1} Suppose $h:\R\to \R$ is continuous and that the following conditions hold. 
 
 \begin{enumerate}
  \item[C$1^*$.] $\ds{\lim _{r\to \infty}{\norm{h}_{2r}\over r}=0}$, where, for $s>0$, $\norm{h}_s=\sup_{x\in[-s,s]}|h(s)|$.
 \vspace{.25cm}
  
  \item[C$2^*$.]There exists a positive number $R$ such that $xh(x)> 0$ whenever $|x|>R$. 
  \vspace{.25cm}
   
   \item[C$3^*$.] $b\in \R\setminus U$ or $c\neq1$.
 \end{enumerate}

\noindent Then 
\begin{equation}\label{E:Aut}
y(t+2)+by(t+1)+cy(t)=h(y(t))
\end{equation}
 has a $N$-periodic solution for every odd integer $N>1$. 

\end{corollary}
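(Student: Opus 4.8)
The plan is to deduce the corollary directly from Theorem \ref{T:Main1} by checking, for each fixed odd $N>1$, that the autonomous nonlinearity $g(t,x):=h(x)$ satisfies conditions C1--C4 for a suitable triple of positive constants $r,\hat z,\delta$. Since $h$ is continuous and independent of $t$, the function $g$ is automatically continuous and $N$-periodic in $t$, so the standing hypotheses of the theorem are in force. I would set $\hat z:=R$, the constant furnished by C$2^*$; then C2 is immediate, since $xg(t,x)=xh(x)>0$ whenever $|x|>R=\hat z$.

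The choice of $r$ and $\delta$ is where C$1^*$ enters. Having fixed the odd integer $N$, write $K:=\norm{M_p(I-Q)}$, a finite constant depending only on $N$ (and on $b,c$): indeed $X_N$ is finite-dimensional, being isomorphic to $\R^{2N}$, so $M_p(I-Q)$ is automatically bounded. For $r>0$ put $\delta:=\norm{h}_{2r}$. Then C1 holds by the very definition of $\delta$, because $|g(t,x)|=|h(x)|\leq\norm{h}_{2r}=\delta$ for all $x\in[-2r,2r]$. It remains to secure C3, namely $R+K\norm{h}_{2r}<r$. Dividing by $r$, this reads $\tfrac{R}{r}+K\tfrac{\norm{h}_{2r}}{r}<1$; the first term tends to $0$ and, by C$1^*$, so does the second, whence the inequality holds for all sufficiently large $r$. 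Choosing such an $r$ (which also ensures $\delta>0$, since $h$ is nonzero beyond $R$ by C$2^*$) fixes the triple $(r,\hat z,\delta)$.

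The remaining point is C4, which I would handle by a short case analysis driven by C$3^*$ together with the observation preceding the statement. If $c\neq 1$, the disjunction ``$c\neq 1$ or $2\leq|b|$'' in C4 holds outright, so C4 is satisfied for every $N$. If instead $c=1$, then C$3^*$ forces $b\in\R\setminus U$. When $|b|\geq 2$ the clause $2\leq|b|$ again makes C4 hold; when $|b|<2$, the condition $b\notin U$ is precisely $\arccos(-b/2)\notin Z$, and by the remark preceding the corollary this guarantees that $N\arccos(-b/2)$ is not a multiple of $2\pi$ for any odd $N>1$, so the hypothesis of the implication in C4 is false and C4 holds vacuously. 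With C1--C4 verified, Theorem \ref{T:Main1} yields an $N$-periodic solution of (\ref{E:Aut}); since $N$ was an arbitrary odd integer greater than $1$, the conclusion follows.

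The only genuinely delicate point I anticipate is the interplay between the $N$-dependence of $K$ and the growth control C$1^*$: one must notice that for a fixed $N$ the constant $K=\norm{M_p(I-Q)}$ does not depend on $r$, so that the ``sublinear in the supremum'' hypothesis C$1^*$ is exactly what is needed to absorb the additive constant $R$ and render C3 solvable for large $r$. Everything else is a routine translation of the starred hypotheses into the hypotheses of Theorem \ref{T:Main1}.
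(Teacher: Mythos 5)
Your proposal is correct and follows essentially the same route as the paper's own proof: fix an odd $N>1$, set $g(t,x)=h(x)$, take $\hat{z}=R$ and $\delta=\norm{h}_{2r}$, and use C$1^*$ to choose $r$ large enough that $R+\norm{M_p(I-Q)}\norm{h}_{2r}<r$, then invoke Theorem \ref{T:Main1}. Your explicit case analysis for C4 and the remark that $\norm{M_p(I-Q)}$ is finite and independent of $r$ merely spell out what the paper asserts in a single line (``C$3^*$. implies C4.'') together with the observation preceding the corollary.
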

\begin{proof}
Let $N$ denote an odd integer greater than 1.  Define $g(t,x)=h(x)$ for all $t\in\mathbb{Z}^+$ and all $x\in\R$. We will show that the conditions C1.-C4. of Theorem \ref{T:Main1} hold for this choice of $g$. Clearly $g$ is continuous and $N$-periodic in $t$.  Further, C$3^*$. implies $C4.$, so that it suffices to show C1.-C3. Now, using C$1^*.$, we may choose $r^*$ such that if $r\geq r^*$, then 
\begin{equation*}
{R+\norm{M_p(I-Q)}\norm{h}_{2r}\over r}<1. 
\end{equation*}Here $M_p$ and $Q$ are as in Theorem \ref{T:Main1}. If we take $r=r^*$, $\hat{z}=R$ and $\delta=\norm{h}_{2r*}$, then clearly we 
have the following:
\begin{enumerate}
\item[C1.] $|g(t,x)|=|h(x)|\leq\norm{h}_{2r}=\delta$ for all $t\in\mathbb{Z}^+$ and each $x\in[-2r,2r]$.
\item[C2.] $xg(t,x)=xh(x)>0$ for all $t\in\mathbb{Z}^+$ and $|x|>\hat{z}$=R. 
\item[C3.] $\hat{z}+\norm{M_p(I-Q)}\delta=R+\norm{M_p(I-Q)}\norm{h}_{2r}<r$.
\end{enumerate}
The result now follows from Theorem \ref{T:Main1}.
\end{proof}
\begin{remark}
As an example, we have the following: if for all $x\in \R$, $h$ satisfies $\ds{|h(x)|\leq \eta(x)|x|+M_1|x|^\beta+M_2}$, where $0\leq\beta<1$, $\eta:\R\to\R^+$ is continuous, and  $\lim_{x\to \pm\infty}\eta(x)=0$, then  for any $\varepsilon>0$ we eventually have 
\begin{equation*}
\norm{h}_{2r}\leq \varepsilon2r+M_1|2r|^\beta+M_2,
\end{equation*}
so that  $\ds{\lim_{r\to \infty}{\norm{h}_{2r}\over r}=0}$. Thus, provided C$2^*.$ and C$3^*.$ hold, we have $N$-periodic solutions to (\ref{E:Aut}) for all odd integers greater than 1. 

We would like to remark once again that C$1^*.$ is much less restrictive then requiring $h$ to satisfy a sublinear growth condition as is required in Corollary 3.11 of \cite{Etheridgesecondorder}.  This shows, once again, that the results given in this paper generalize those found in \cite{Etheridgesecondorder}.

  As a concrete example, take 
\begin{equation*}\ds{k(x)=\begin{cases}{\ds{-1\over \ln(-x)}}& x\leq-e\\ \ds{{1\over e}}x&-e<x<e\\\ds{{1\over \ln(x)}}&e\leq x\end{cases}},\end{equation*} and define $h(x)=k(x)x+M_1x^\beta+M_2$. From the explanation above, we have that C$1^*.$ holds for this $h$. Further, it is clear that C$2^*.$ holds, so that provided C$3^*$. holds, (\ref{E:Aut}) has a solution. However, $h$  is not sublinear, for if it were, then $|h(x)| <D_1x^\gamma+D_2$, for some real positive real numbers $D_1$,$D_2$, and $\gamma$, with $0<\beta<\gamma<1$.  We would then have that for large $x$,  $\ds{{|x|^{1-\gamma}\over\ln|x|}}$ was  less than 1, which is not the case.
\end{remark}

%%%%%%%%%%%
%%%%%%%%%%%%Second Main Result%%%%%%%%%%
%%%%%%%%%%%%%%%%%%%%%%%%

\section{$dim(Ker(\mathcal{L}))=2$}
We now come to our second existence result. Before giving the formal statement of the theorem, we make some introductory assumptions and then list several useful observations which follow directly from our previous analysis.  

 In what follows we will be assuming that $c=1$, $|b|<2$, $N$ is odd with $N\arccos\left(-{b\over 2}\right)$ a multiple of $2\pi$, and, again, $g$ is $N$-periodic in $t$. In this case, it follows easily that 
\begin{equation*}
\Phi(t)=\begin{pmatrix}
\cos(\theta t) & \sin(\theta t)\\
\cos(\theta (t+1)) & \sin(\theta(t+1))
\end{pmatrix}
\end{equation*}  
is a fundamental matrix solution to (\ref{A:Homog}), where $\theta=\arccos\left(-{b\over 2}\right)$. Since $N\arccos(-{b\over 2})$ is a multiple of $2\pi$, we have that $\Phi$ is $N$-periodic.
Further, since $c=1$, we have that the periodic scalar problems (\ref{periodicdiff}) and (\ref{periodicdiffad}) agree, so that 
\begin{equation*}
\Gamma(t)=\begin{pmatrix}
-\cos(\theta t) & -\sin(\theta t)\\
\cos(\theta (t-1)) & \sin(\theta(t-1))
\end{pmatrix}
\end{equation*} 
is a fundamental matrix solution to (\ref{E:SystemAd}). 
From the discussion that follows after Proposition $\ref{P:Perp}$, we have $h\in Im(\mathcal{L})$  if and only if 
\begin{equation*}
\sum_{i=0}^{N-1}\Gamma^T(i+1)h(i)=\sum_{i=0}^{N-1}\begin{pmatrix}
-\cos(\theta (i+1)) & \cos(\theta i)\\
-\sin(\theta (i+1)) & \sin(\theta i)
\end{pmatrix}
\begin{pmatrix}
h_1(i)\\
h_2(i)
\end{pmatrix}=0,
\end{equation*}
where $v_j$ denotes the $jth$ component of a vector $v$.

 \begin{theorem}\label{T:Main2} Suppose the following conditions hold:
 
  \vspace{.25cm}
 
 \begin{enumerate}
  \item[C1.] The function $g$ is $N$-periodic in $t$ and bounded, say by $K$.
     \vspace{.25cm}
  
  \item[C2.] There exist constants $\hat{z}$ and $J>0$ such that for all $x\in\R$ with $x\geq \hat{z}$ and for every $t\in\mathbb{Z}^+$, $g(t,-x)\leq-J<0<J\leq g(t,x)$.
  \vspace{.25cm}
  
   \item[C3.] $N\theta=2\pi r$ and $\ds{{N\over gcd(r,N)}\geq\max\left\{3, {K\over J}+1\right \}}$, where $gcd(r,N)$ denotes the greatest common divisor of $r$ and $N$.
 \end{enumerate}

\noindent Then  (\ref{periodicdiff}) has a $N$-periodic solution. 
 \end{theorem}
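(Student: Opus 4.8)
The plan is to run the Lyapunov--Schmidt reduction of Proposition \ref{P:Lyap} exactly as in Theorem \ref{T:Main1}, but now with the two-dimensional kernel parametrized by $(\alpha_1,\alpha_2)\in\R^2$. Since $Ker(\mathcal{L})$ is spanned by the two columns $\Phi_1,\Phi_2$ of $\Phi$, a general kernel element has first scalar component $\alpha_1\cos(\theta i)+\alpha_2\sin(\theta i)$ at time $i$, and by the solvability condition recorded after Proposition \ref{P:Perp} the bifurcation equation $Q\mathcal{F}(x)=0$ splits into the pair $\sum_{i=0}^{N-1}\cos(\theta i)g(i,\cdot)=0$ and $\sum_{i=0}^{N-1}\sin(\theta i)g(i,\cdot)=0$. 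Setting $p(\alpha_1,\alpha_2,x)=M_p(I-Q)\mathcal{F}(\alpha_1\Phi_1+\alpha_2\Phi_2+x)$ and letting $h$ denote its first scalar component, Proposition \ref{P:Lyap} identifies the $N$-periodic solutions of (\ref{periodicdiff}) with the zeros of a map $H:\R^2\times Im(I-P)\to\R^2\times Im(I-P)$ whose first two coordinates are the two bifurcation sums (with argument $\alpha_1\cos(\theta i)+\alpha_2\sin(\theta i)+h(i)$) and whose third coordinate is $x-p(\alpha_1,\alpha_2,x)$. I would give $\R^2\times Im(I-P)$ the max norm and take $\Omega$ to be a large closed ball of radius $\rho$.

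Next I would introduce the homotopy $E(\lambda,(\alpha_1,\alpha_2,x))$ that linearly deforms the first two coordinates to $(1-\lambda)\alpha_j+\lambda(\text{bifurcation sum}_j)$ and the third to $x-\lambda p$, mirroring the homotopy in Theorem \ref{T:Main1}. This is a homotopy between the identity and $H$, so it suffices to show $E(\lambda,\cdot)$ is nonzero on $\partial\Omega$ for every $\lambda\in[0,1]$, whence $deg(H,\Omega,0)=deg(I,\Omega,0)=1$ and $H$ has a zero. On the part of $\partial\Omega$ where $\norm{x}=\rho$, boundedness of $g$ (C1) gives $\norm{p}\leq\norm{M_p(I-Q)}K$, so the third coordinate $x-\lambda p$ cannot vanish once $\rho>\norm{M_p(I-Q)}K$; this disposes of the image direction just as in the earlier proof.

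The heart of the argument is the part of $\partial\Omega$ where $A:=\sqrt{\alpha_1^2+\alpha_2^2}=\rho$. Here I would test the first two coordinates against $(\alpha_1,\alpha_2)$: writing $\alpha_1=A\cos\phi$ and $\alpha_2=A\sin\phi$, the inner product is
\begin{equation*}
\alpha_1E_1+\alpha_2E_2=(1-\lambda)A^2+\lambda\sum_{i=0}^{N-1}A\cos(\theta i-\phi)\,g\big(i,A\cos(\theta i-\phi)+h(i)\big),
\end{equation*}
so it is enough to show the sum $S(\phi,A)=\sum_{i=0}^{N-1}A\cos(\theta i-\phi)g(i,\cdot)$ is strictly positive for all $\phi$ once $\rho$ is large; then $\alpha_1E_1+\alpha_2E_2>0$ for every $\lambda\in[0,1]$, so $(E_1,E_2)\neq(0,0)$ and $E(\lambda,\cdot)\neq0$ on this piece of the boundary (this is the Poincar\'e--Bohl mechanism already implicit in Theorem \ref{T:Main1}). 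The key structural fact is that, because $N\theta=2\pi r$, the residues $\theta i \bmod 2\pi$ take exactly $m:=N/\gcd(r,N)$ equally spaced values $2\pi k/m$, each attained $\gcd(r,N)$ times, so that $S$ is, up to the bounded correction coming from $h$, a sum over equally spaced resonant phases.

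The main obstacle is establishing this positivity uniformly in $\phi$, and this is exactly where C2 and C3 enter. By C2, every index for which $A\cos(\theta i-\phi)+h(i)$ has modulus exceeding $\hat z$ contributes a term of the definite sign of $\cos(\theta i-\phi)$ and magnitude at least $J\,|A\cos(\theta i-\phi)|$, while the remaining ``transition'' indices contribute terms bounded by $(\hat z+\norm{M_p(I-Q)}K)K$ independently of $A$. The requirement $m\geq 3$ guarantees that the $m$ equally spaced phases can never all cluster near the zeros $\pm\pi/2$ of the cosine, so for every $\phi$ at least one phase stays a definite distance from those zeros; the requirement $m\geq K/J+1$ is what makes the definite-sign ($J$-sized) contributions dominate the uncontrolled ($K$-bounded) transition contributions for all $\phi$. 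Carrying out this bookkeeping carefully---tracking how many of the $m$ resonant phases fall in the positive, negative, and transition ranges for a given $\phi$---is the technical crux and the precise place where the form of C3 is used. Once $S(\phi,A)>0$ is secured for all $\phi$ at a sufficiently large $\rho$ (chosen also to exceed $\norm{M_p(I-Q)}K$), both boundary cases are covered, $deg(H,\Omega,0)=1$, and the existence of an $N$-periodic solution follows.
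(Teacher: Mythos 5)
Your scaffolding coincides step-for-step with the paper's own proof: the same Lyapunov--Schmidt map on $\R^2\times Im(I-P)$ whose first two coordinates are the bifurcation sums weighted by $\cos(\theta i)$ and $\sin(\theta i)$, the same linear homotopy to the identity, the same dispatch of the $\norm{x}$-part of the boundary via the bound $\norm{M_p(I-Q)}K$ from C1, the same observation that the phases $\theta i \bmod 2\pi$ run through $m=N/\gcd(r,N)$ equally spaced values each attained $\gcd(r,N)$ times, and the same inner-product (Poincar\'e--Bohl) test against $\alpha$ on the $|\alpha|=\rho$ part. But at the decisive point you stop: you declare the phase bookkeeping ``the technical crux'' and defer it, and the partial justification you do offer is not strong enough to close the argument. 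What the theorem actually requires is that for all sufficiently large $\rho$, \emph{at most one} of the $m$ directions $e^{i\theta k}$, $k=0,\dots,m-1$, can lie in the transition band $\left|\left<\alpha,e^{i\theta k}\right>\right|\leq \hat z+\norm{M_p(I-Q)}K$ for a given $\alpha$ with $|\alpha|=\rho$. Your stated claim --- that $m\geq 3$ ensures \emph{at least one} phase stays a definite distance from the zeros of the cosine --- is strictly weaker: if two indices could fall in the transition band, each contributes an error as large as $(\hat z+\norm{M_p(I-Q)}K)K$, and C3 only supplies $(m-1)J\geq K$, not $(m-2)J\geq 2K$, so the positivity of $\left<\alpha,\sum_j e^{i\theta j}g(j,\cdot)\right>$ would not follow.

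The ``at most one'' claim is exactly where the oddness of $N$ enters, and it is not automatic: two distinct directions $e^{i\theta k}$ and $e^{i\theta l}$ can both be nearly orthogonal to $\alpha$ precisely when $\theta(k-l)$ is close to a multiple of $\pi$ (near-antipodal or near-equal lines). The paper rules this out by showing $\theta k\notin\pi\mathbb{Z}$ for $0<k<m$ --- if $\theta k=\pi n$ with $n$ odd then $Nn=2rk$, impossible since $N$ and $n$ are odd, while $n$ even contradicts distinctness of the $e^{i\theta k}$ --- then deducing $d\left(\{\theta k\},\pi\mathbb{Z}\right)>0$ by finiteness, and finally choosing the radius $\hat s$ so large that membership of $\alpha$ in two bands $A^k_{\hat s}\cap A^l_{\hat s}$ would force $\theta(k-l)$ to lie within $d\left(\{\theta k\},\pi\mathbb{Z}\right)$ of $\pi\mathbb{Z}$, a contradiction. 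Only after this disjointness is secured does the single bad index $k^*$ appear, and the estimate
\begin{equation*}
\left(m-1\right)\left(\hat z+\norm{M_p(I-Q)}K\right)J-\left(\hat z+\norm{M_p(I-Q)}K\right)K\geq 0
\end{equation*}
closes via $m-1\geq K/J$. So your proposal is a faithful reconstruction of the paper's framework, but its central quantitative lemma is left unproven, and the heuristic you substitute for it would fail if two phase directions could simultaneously sit in the transition band; you need the angular-separation argument (or an equivalent one exploiting that the $m$ equally spaced directions with $m$ odd contain no antipodal pair) before C3 can be invoked.
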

 
 \begin{proof}
We start by defining $H_3:\R^2\times Im(I-P)\to \R^2\times Im(I-P)$ by

\begin{equation*}
\begin{split}
H_3(\alpha,x)&=\begin{pmatrix}
\ds{\sum_{i=0}^{N-1}}\Gamma^T(i+1)
\begin{pmatrix}0\\
g(i,[\Phi(i)\alpha+x(i)]_1)
\end{pmatrix}\\
x-M_p(I-Q)\mathcal{F}(\Phi(\cdot)\alpha+x)
\end{pmatrix}\\
&=
\begin{pmatrix}
\ds{\sum_{i=0}^{N-1}}\begin{pmatrix}
-\cos(\theta (i+1)) & \cos(\theta i)\\
-\sin(\theta (i+1)) & \sin(\theta i)
\end{pmatrix}
\begin{pmatrix}0\\
g(i,[\Phi(i)\alpha+x(i)]_1)
\end{pmatrix}\\
x-M_p(I-Q)\mathcal{F}(\Phi(\cdot)\alpha+x)
\end{pmatrix}\\
&=
\begin{pmatrix}
\ds{\sum_{i=0}^{N-1}}\begin{pmatrix}
\cos(\theta i)g(i,[\Phi(i)\alpha+x(i)]_1)\\
\sin(\theta i)g(i,[\Phi(i)\alpha+x(i)]_1)
\end{pmatrix}
\\
x-M_p(I-Q)\mathcal{F}(\Phi(\cdot)\alpha+x)
\end{pmatrix}.
\end{split}
\end{equation*}
From Proposition \ref{P:Lyap}  and the discussion preceding the statement of our theorem, it follows that the solutions to (\ref{E:SOper}) are precisely the zeros of $H_3$. We will show $H_3$ has a zero by showing that for some appropriately chosen $\Omega$, the Brouwer degree, $deg(H_3,\Omega, 0)$, is nonzero.

We introduce a norm which makes $\R^2\times Im(I-P)$ a Banach space.  We let 
\begin{equation*}
\norm{(\alpha,x)}=\max\{|\alpha|,\norm{x}\}.
\end{equation*}
For simplicity of notation, we will use $e^{it}$ to denote the vector $(\cos(t),\sin(t))$.  We now define, for $\ds{k=0, 1,\cdots, {N\over gcd(r,N)}-1}$, 
\begin{equation*}
A^k_s=\{\alpha\in\R^2\mid |\alpha|=s \text{ and } |<\alpha, e^{i\theta k}>|\leq\hat{z}+\norm{ME}K\}.
\end{equation*}
We will show that for large enough $s$,  $A_s^k\cap A_s^l=\emptyset$, whenever $k\neq l$. 

To see this, first note that each $\ds{k\in\left\{0, 1,\cdots, {N\over gcd(r,N)}-1\right\}}$ produces a distinct $e^{i\theta k}$.  If not, then $e^{i\theta j}=1$ for some $\ds{j\in\left\{1,\cdots, {N\over gcd(r,N)}-1\right\}}$. Thus, $\theta j=2\pi m$ for some $m$.  Since  $\ds{\theta={2\pi r\over N}}$, this gives $\ds{{2\pi rj\over N}=2\pi m}$.  Using the fact that $\ds{r\over N}={\ds{{r\over gcd(r,N)}}\over\ds{{N\over gcd(r,N)}}}$, we  get $\ds{{r\over gcd(r,N)}j={N\over gcd(r,N)}m}$.  Since $\ds{{r\over gcd(r,N)}}$ and $\ds{{N\over gcd(r,N)}}$ are relatively prime, it follows that $\ds{{N\over gcd(r,N)}}$ divides $j$, which is not the case. 
In addition, it is clear that the collection $e^{i\theta k}, k=0, 1, \cdots, N-1,$ is just $gcd(r,N)$ copies of the collection $\ds{e^{i\theta k}, k=0,1, \cdots, {N\over gcd(r,N)}-1}$ .

Further, for each $\ds{k\in\left\{0, 1,\cdots, {N\over gcd(r,N)}-1\right\}}$, $\theta k\neq \pi m$ for every odd integer $m$ .  If this was the case, then $\ds{{2\pi rk\over N}=\pi m }$, so that $Nm=2rk$, which cannot be true since $N$ and $m$ are odd. 

We now define $A=\ds{\{\theta k\mid k=0,1,\cdots,  {N\over gcd(r,N)}-1\}}$ and let $d(A, \pi \mathbb{Z})$ denote $\inf\{|a-b|\mid a\in A,  b\in \pi\mathbb{Z}\}$.  Since $A$ is finite and $\pi \mathbb{Z}$ is closed, we have that $d(A, \pi \mathbb{Z})=|\theta j-\pi m|$ for some $\ds{j\in\left\{0, 1,\cdots, {N\over gcd(r,N)}-1\right\}}$ and some $m$ in $\mathbb{Z}$.  Since, as was shown above, $\theta k\neq \pi n$ for every $\ds{k\in\left\{0, 1,\cdots, {N\over gcd(r,N)}-1\right\}}$ and every $n\in \mathbb{Z}$, we must have $d(A, \pi \mathbb{Z})>0.$

Now assume that there exists $ k$ and $l$, $k\neq l$, with $A_s^k\cap A_s^l\neq \emptyset$.  We assume, without loss of generality that $k>l$. Writing $\alpha=|\alpha|e^{i\theta_\alpha}$, this implies that both
\begin{equation*}
|\cos(\theta_\alpha-\theta k)|\leq {\hat{z}+\norm{ME}K\over s}
\end{equation*}
and 
\begin{equation*}
|\cos(\theta_\alpha-\theta l)|\leq {\hat{z}+\norm{ME}K\over s}.
\end{equation*}
 Taking $s$  large enough, we could force $|\theta_\alpha-\theta k|$ and $|\theta_\alpha-\theta l|$  to be less than $\ds{{d(A,\pi\mathbb{Z})\over 2}}$ away from an odd multiple of $\ds{{\pi\over 2}}$, say $\ds{|\theta_\alpha-\theta k-m^*{\pi\over 2}|<{d(A,\pi \mathbb{Z})\over 2}}$ and $\ds{|\theta_\alpha-\theta l-n^*{\pi\over 2}|}<{d(A,\pi \mathbb{Z})\over 2} $  .  We would then have
\begin{equation*}
\begin{split}
\ds{|\theta (k-l)-{(m^*-n^*)\over 2}\pi}|&\leq \ds{|\theta_\alpha-\theta l-n^*{\pi\over 2}|} +|\theta_\alpha-\theta k-m^*{\pi\over 2}|\\
& <{d(A,\pi \mathbb{Z})},
\end{split}
\end{equation*}
which is clearly not the case.  It is now clear that for large enough $s$, say $s\geq\hat{s}$,  $A_s^k\cap A_s^l=\emptyset$ whenever $k\neq l$. 

We now define $\Omega:=B(0,\hat{s})\times B(0,\norm{M_p(I-Q)}K)$.  We also define 

  $E:[0,1]\times\overline{\Omega}\rightarrow {\R}^{2}\times Im(I-P)$ 
  
  \noindent by 
\begin{equation*}
\begin{split}
E(\lambda, (\alpha,x))&=\begin{pmatrix}\ds{(1-\lambda)\alpha+\lambda\ds{\sum_{j=0}^{N-1}}\begin{pmatrix}
\cos(\theta j)g(j,[\Phi(i)\alpha+x(j)]_1)\\
\sin(\theta j)g(j,[\Phi(j)\alpha+x(j)]_1)
\end{pmatrix}}\\
x-\lambda M_p(I-Q)\mathcal{F}(\Phi(\cdot)\alpha+x)\end{pmatrix}\\
&=\begin{pmatrix}(1-\lambda)\alpha+\lambda \ds{\sum_{j=0}^{N-1}}e^{i\theta j}g(j,[\Phi(j)\alpha+x(j)]_1)\\
x-\lambda M_p(I-Q)\mathcal{F}(\Phi(\cdot)\alpha+x)\end{pmatrix}
\end{split}
\end{equation*}
Clearly, $E$ is a homotopy between $I$ and $H_3$; thus, the proof will be complete provided we show $0\notin E(\lambda,\partial(\Omega))$ for each $\lambda \in (0,1)$.

Now, we have that $(\alpha,x)\in \partial(\Omega)$ if and  only if 
\begin{equation*}|\alpha|=\hat{s} \text{ and } \norm{x}\leq \norm{M_p(I-Q)}K,
\end{equation*}
or
\begin{equation*}
 |\alpha|\leq \hat{s}\text{ and } \norm{x}=\norm{M_p(I-Q)}K.
\end{equation*}

We will first assume $(\alpha,x)\in\partial(\Omega)$ with $|\alpha|\leq \hat{s}\text{ and } {x}=\norm{M_p(I-Q)}K$. Then for $\lambda\in(0,1)$
\begin{equation*}
\norm{\lambda M_p(I-Q)\mathcal{F}(\Phi(\cdot)\alpha+x)}<\norm{M_p(I-Q)}K=\norm{x},
\end{equation*}
so that 
\begin{equation}\label{E:Homo2}
x-\lambda M_p(I-Q)\mathcal{F}(\Phi(\cdot)\alpha+x)\neq 0.
\end{equation}

Now we assume $(\alpha,x)\in\partial(\Omega)$ with $|\alpha|=\hat{s} \text{ and } \norm{x}\leq \norm{M_p(I-Q)}K$.  We have that
\begin{equation*}
\begin{split}
\left<\alpha, \ds{\sum_{j=0}^{N-1}}e^{i\theta j}g(j,[\Phi(j)\alpha+x(j)]_1)\right>&=\ds{\sum_{j=0}^{N-1}}\left<\alpha,e^{i\theta j}g(j,[\Phi(j)\alpha+x(j)]_1)\right>\\
&=\sum_{j=0}^{N-1}\left<\alpha,e^{i\theta j}\right>g(j,<\alpha,e^{i\theta j}>+[x(j)]_1).
\end{split}
\end{equation*}
Since $|\alpha|=\hat{s}$, $\alpha\in A_{\hat{s}}^k$ for at most one $\ds{k\in\{0, 1, \cdots, {N\over gcd(r,N)}-1\}}$, say $k^*$.  Thus,
 $|<\alpha,e^{i\theta j}>+[x(j)]_1|\geq (\hat{z}+\norm{M_p(I-Q)}K)-\norm{M_p(I-Q)}K=\hat{z}$, for any  $\ds{j\in\{0, 1, \cdots, {N\over gcd(r,N)}-1\}}$, $j\neq k^*$. It follows that for each of these $j$'s, \\$<\alpha,e^{i\theta j}>g(j,<\alpha,e^{i\theta j}>+[x(j)]_1)>(\hat{z}+\norm{ME}K)J$. Since the collection $e^{i\theta j}, j=0, 1, \cdots, N-1,$ is just $gcd(r,N)$ copies of the collection $\ds{e^{i\theta j}, j=0,1, \cdots {N\over gcd(r,N)}-1}$, this gives

$\left<\alpha, \ds{\sum_{j=0}^{N-1}}e^{i\theta j}g(j,[\Phi(j)\alpha+x(j)]_1)\right>>$
\begin{equation*}
\begin{split}
&\ds{\ds{\sum_{j\in\{0,1,\cdots,{N\over gcd(r,N)}-1 \}-\{k^*\}}}}gcd(r,N)(\hat{z}+\norm{ME}K)J\\
&\hspace{1in}+gcd(r,N)\left<\alpha,e^{i\theta k^*}\right>g(k^*,\left<\alpha,e^{i\theta k^*}\right>+[x(k^*)]_1)\\
&\geq gcd(r,N)\left(\left({N\over gcd(r,N)}-1\right)(\hat{z}+\norm{M(I-Q)}K)J-(\hat{z}+\norm{M(I-Q)}K)K\right)\\
&\geq gcd(r,N)\left({K\over J}(\hat{z}+\norm{M(I-Q)}K)J-(\hat{z}+\norm{M(I-Q)}K)K\right)=0.
\end{split}
\end{equation*}

Writing $|(1-\lambda)\alpha+\lambda \ds{\sum_{j=0}^{N-1}}e^{i\theta j}g(j,[\Phi(j)\alpha+x(j)]_1)|^2$ as 
\begin{equation*}
\begin{split}
(1-\lambda)^2||\alpha|^2&+\lambda^2\left|\ds{\sum_{j=0}^{N-1}}e^{i\theta j}g(j,[\Phi(j)\alpha+x(j)]_1)\right|\\&+(1-\lambda)\lambda\left<\alpha, \ds{\sum_{j=0}^{N-1}}e^{i\theta j}g(j,[\Phi(j)\alpha+x(j)]_1)\right>, 
\end{split}
\end{equation*}
we see 
\begin{equation}\label{E:Homo1}
|(1-\lambda)\alpha+\lambda \ds{\sum_{j=0}^{N-1}}e^{i\theta j}g(j,[\Phi(j)\alpha+x(j)]_1)|^2>0.
\end{equation}
(\ref{E:Homo2}) and (\ref{E:Homo1}) now show that $E$ is nonzero on $\partial(\Omega)$ for every $\lambda\in(0,1)$.  The proof is now complete by the invariance of the Brouwer degree under homotopy.
\end{proof}

\begin{remark}
We would like to mention that there is no analogue to Theorem \ref{T:Main2} in \cite{Etheridgesecondorder}. Throughout  \cite{Etheridgesecondorder}, the authors assume that  if $N\arccos(-{b\over2})$ is a multiple of $2\pi$, then $c\neq 1$ or $2\leq|b|$. They do so because, under these conditions, as was mentioned in the proof of Theorem \ref{T:Main1}, the solution space of the $N$-periodic linear homogeneous problem
  \begin{equation*}
y(t+2)+by(t+1)+cy(t)=0
\end{equation*} 
is at most one-dimensional.  When  $c=1$, $|b|<2$,  and $N$ is odd with $N\arccos\left(-{b\over 2}\right)$ a multiple of $2\pi$, the solution space of the $N$-periodic linear homogeneous problem is two-dimensional and in this case, the analysis of (\ref{periodicdiff}) is more delicate due to the fact that the interaction of the solution space to this homogeneous problem and the nonlinearity, $g$,  is much more complex.

%In fact, very little has been said in the literature regarding difference equations at higher-dimensional resonance; that is, problems where the solution space of the associated linear homogeneous problem is greater than one. For some some results regarding systems of difference equations, see \cite{MaroncelliDiscreteBounded}. 
\end{remark}
We now give an example which illustrates the ideas in Theorem \ref{T:Main2}.  Suppose that $N$ is prime and that $g$ is  continuous with $g(t,x)=g(x)$ for all $t\in\mathbb{Z}^+$. Suppose further, that $g$ is bounded, say $|g(x)|\leq K$, and that $\lim_{x\to \pm \infty}g(x)=\pm K$. Since $\lim_{x\to \pm \infty}g(x)=\pm K$, we have that for each $\varepsilon$, $0<\varepsilon<K$, there exists and $R_\varepsilon$ such that if $x>R_\varepsilon$, then $g(-x)<-K+\varepsilon<0<K-\varepsilon<g(x)$. Thus, we may choose $J$, of Theorem \ref{T:Main2}, to be $K-\varepsilon$ for any preferable  choice of $\varepsilon$.  Taking $\varepsilon$  smaller than $K/2$, we see that we can make $\ds{\max\left\{3, {K\over J}+1\right \}}=3$. Since $N$ is prime, $gcd(r,N)=1$ and so $\ds{{N\over gcd(r,N)}=N\geq3=\max\left\{3, {K\over J}+1\right \}}$. It follows that $C3.$, of Theorem \ref{T:Main2}, holds.  Since $C1.$ and $C2.$ trivially hold for a nonlinearity with the above mentioned properties, Theorem 4.1 gives the existence of a $N$-periodic solution to (\ref{periodicdiff}).  

We end by making mention that it is unclear whether or not the growth restrictions that allow for the existence of a solution when $Ker(\mathcal{L})$ is one-dimensional; that is, C$1.$ and C$3.$ of Theorem \ref{T:Main1}, and C$1^*.$ of Corollary \ref{C:Main1}, can be extended to the case in which $Ker(\mathcal{L})$ is two-dimensional.  It is of interest to determine if, and to what extent, the boundedness condition, C$2.$, of Theorem \ref{T:Main2} can be weakened to allow for unbounded nonlinearities, not only in the case of periodic solutions to second order difference equations, but also higher-dimensional resonance cases which arise from the study of higher-order scalar difference equations, as well as systems of difference equations.
%%%
%%%\begin{remark}
%%%The main focus of this paper has been the cases of resonance; that is, cases when the $Ker(\mathcal{L})$  is nontrivial.  We would howver li  Theorem \ref{T:Main1} is also valid in the following context. If $Ker(\mathcal{L})$ is trivial; that is, $Ker(I-A^N)$ is trivial, then $P$ and $Q$ are both the $0$ operator.  In this case the Lyapunov-Schmidt procedure gives that the solutions to $\mathcal{L}x=\mathcal{F}(x)$ are equivalent to the fixed points of the map $H:X_N\to X_N$ defined by
%%%$H(x)=\mathcal{L}^{-1}\mathcal{F}(x)$. It is clear then that if 
%%%\end{remark}

{\bibliography{References}}{}

\begin{thebibliography}{10}

\bibitem{Chow}
S.~Chow and J.~K. Hale.
\newblock {\em Methods of Bifurcation Theory}.
\newblock Springer, Berlin, 1982.

\bibitem{Elaydi}
S.~Elaydi.
\newblock {\em An introduction to difference equations}.
\newblock Springer, 2005.

\bibitem{EthridgePeriodic}
Debra~L. Etheridge and Jes\'{u}s Rodr\'{\i}guez.
\newblock Periodic solutions of nonlinear discrete-time systems.
\newblock {\em App. Anal.}, 62:119--137, 1996.

\bibitem{HendersonThreepoint}
J.~Henderson, S.K. Ntouyas, and I.K. Purnaras.
\newblock Positive solutions for systems of three-point nonlinear discrete
  boundary value problems.
\newblock {\em Neural Parallel Sci. Comput.}, 16:209--223, 2008.

\bibitem{HendersonPost}
J.~Henderson, S.K. Ntouyas, and I.K. Purnaras.
\newblock Positive solutions for systems of nonlinear discrete boundary value
  problems.
\newblock {\em J. Difference Equ. Appl.}, 15(10):895--912, 2009.

\bibitem{HendersonThompson}
J.~Henderson and H.~B. Thompson.
\newblock Existence of multiple solutions for second-order discrete boundary
  value problems.
\newblock {\em Comput. Math. Appl.}, 43(10-11):1239--1248, 2002.

\bibitem{HendersonPostMult}
Johnny Henderson and Rodica Luca.
\newblock Positive solutions for a system of second-order multi-point discrete
  boundary value problems.
\newblock {\em J. Difference Equ. Appl.}, 18(9):1575--1592, 2012.

\bibitem{LiuMult}
S.~Wang J.~Liu and J.~Zhang.
\newblock Multiple solutions for boundary value problems of second order
  difference equations with resonance.
\newblock {\em J. Math. Anal. Appl.}, 374:187--196, 2011.

\bibitem{Kelley}
W.~G. Kelley and A.C. Peterson.
\newblock {\em Difference Equations}.
\newblock Academic Press, New York, 1978.

\bibitem{MaSturm}
R.~Ma.
\newblock Nonlinear discrete sturm-liouville problems at resonance.
\newblock {\em Nonlinear Anal.}, 67:3050--3057, 2007.

\bibitem{MaroncelliDiscreteBounded}
D.~Maroncelli and J.~Rodr\'{\i}guez.
\newblock On the solvability of multipoint boundary value problems for discrete
  systems at resonance.
\newblock {\em J. Difference Equ. Appl.}, 10.1080/10236198.2013.805216, 2013.

\bibitem{MaroncelliNonlinearImp}
D.~Maroncelli and J.~Rodr\'{\i}guez.
\newblock On the solvability of nonlinear impulsive boundary value problems.
\newblock {\em Topol. Methods Nonlinear Anal.}, 44:381--398, 2015.

\bibitem{RodSturm}
Jes\'{u}s Rodr\'{\i}guez.
\newblock Nonlinear discrete sturm-liouville problems.
\newblock {\em J. Math. Anal. Appl.}, 308, Issue 1:380--391, 2005.

\bibitem{RodAbSturmGlobal}
Jes\'{u}s Rodr\'{\i}guez and Zarchary Abernathy.
\newblock Nonlinear discrete sturm-liouville problems with global boundary
  conditions.
\newblock {\em J. Difference Equ. Appl.}, 18, Issue 3:431--445, 2012.

\bibitem{Etheridgesecondorder}
Jes\'{u}s Rodr\'{\i}guez and Debra~L. Etheridge.
\newblock Periodic solutions of nonlinear second-order difference equations.
\newblock {\em Adv. Difference Eqn.}, pages 173--192, 2005, no. 2.

\bibitem{RodTaylorscalardiscrete}
Jes\'{u}s Rodr\'{\i}guez and Padraic Taylor.
\newblock Scalar discrete nonlinear multipoint boundary value problems.
\newblock {\em J. Math. Anal. Appl.}, 330:876--890, 2007.

\bibitem{Zhou}
Z.~Guo Z.~Zhou, J.~Yu.
\newblock Periodic solutions of higher-dimensional discrete systems.
\newblock {\em Proc. Roy. Soc. Edinburgh Sect. A}, 134:1013--1022, 2004.

\bibitem{ZhangMultipoint}
G.~Zhang and R.~Medina.
\newblock Three-point boundary value problems for difference equations.
\newblock {\em Comput. Math. Appl.}, 48:1791--1799, 2004.

\end{thebibliography}
\bibliographystyle{plain}
 \end{document}